\begin{document}

\theoremstyle{plain}
\newtheorem{thm}{Theorem}[section]
\newtheorem{cor}[thm]{Corollary}
\newtheorem{con}[thm]{Conjecture}
\newtheorem{cla}[thm]{Claim}
\newtheorem{lm}[thm]{Lemma}
\newtheorem{prop}[thm]{Proposition}
\newtheorem{example}[thm]{Example}

\theoremstyle{definition}
\newtheorem{dfn}[thm]{Definition}
\newtheorem{alg}[thm]{Algorithm}
\newtheorem{prob}[thm]{Problem}
\newtheorem{rem}[thm]{Remark}

\newcommand{\A}{\mathcal{A}}
\newcommand{\B}{\mathcal{B}}
\newcommand{\D}{\mathcal{D}}
\newcommand{\E}{\mathcal{E}}
\newcommand{\F}{\mathcal{F}}

\renewcommand{\baselinestretch}{1.1}

\title{An Erd{\H o}s-Ko-Rado theorem for permutations with fixed number of cycles}
\author{
Cheng Yeaw Ku
\thanks{ Department of Mathematics, National University of
Singapore, Singapore 117543. E-mail: matkcy@nus.edu.sg} \and Kok
Bin Wong \thanks{
Institute of Mathematical Sciences, University of Malaya, 50603
Kuala Lumpur, Malaysia. E-mail:
kbwong@um.edu.my.} } \maketitle

\begin{abstract}\noindent
Let $S_{n}$ denote the set of permutations of $[n]=\{1,2,\dots, n\}$. For a positive integer $k$, define $S_{n,k}$ to be the set of all permutations of $[n]$ with exactly $k$ disjoint cycles, i.e.,
\[ S_{n,k} = \{\pi \in S_{n}: \pi = c_{1}c_{2} \cdots c_{k}\},\] 
where $c_1,c_2,\dots ,c_k$ are disjoint cycles. The size of $S_{n,k}$ is given by $\left [ \begin{matrix}n\\ k \end{matrix}\right]=(-1)^{n-k}s(n,k)$, where $s(n,k)$ is the  Stirling number of the first kind. A family $\mathcal{A} \subseteq S_{n,k}$ is said to be $t$-{\em intersecting} if any two elements of $\mathcal{A}$ have at least $t$ common cycles. In this paper, we show that, given any positive integers $k,t$ with $k\geq t+1$, there exists an integer $n_0=n_0(k,t)$, such that for all $n\geq n_0$, if $\mathcal{A} \subseteq S_{n,k}$ is $t$-intersecting, then 
\[ |\mathcal{A}| \le \left [ \begin{matrix}n-t\\ k-t \end{matrix}\right],\]
with equality if and only if $\mathcal{A}$ is the stabiliser of $t$ fixed points.

\end{abstract}

\bigskip\noindent
{\sc keywords:}  $t$-intersecting family, Erd{\H o}s-Ko-Rado, permutations

\section{Introduction}

Let $[n]=\{1, \ldots, n\}$, and let ${[n] \choose k}$ denote the
family of all $k$-subsets of $[n]$. A family $\mathcal{A}$ of subsets of $[n]$ is $t$-{\em intersecting} if $|A \cap B| \ge t$ for all $A, B \in \mathcal{A}$. One of the most beautiful results in extremal combinatorics is
the Erd{\H o}s-Ko-Rado theorem.

\begin{thm}[Erd{\H o}s, Ko, and Rado \cite{EKR}, Frankl \cite{Frankl}, Wilson \cite{Wilson}]\label{EKR} Suppose $\mathcal{A} \subseteq {[n] \choose k}$ is $t$-intersecting and $n>2k-t$. Then for $n\geq (k-t+1)(t+1)$, we have
\begin{equation}
\vert \mathcal{A} \vert\leq {n-t \choose k-t}.\notag
\end{equation}
Moreover, if $n>(k-t+1)(t+1)$ then equality holds if and only if $\mathcal{A}=\{A\in {[n] \choose k}\ :\ T\subseteq A\}$ for some $t$-set $T$.
\end{thm}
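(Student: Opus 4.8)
\section*{Proof proposal for Theorem \ref{EKR}}

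The plan is to combine the shifting (compression) technique with a double induction on $n$ and $k$, using shiftedness to control the ``link'' at a single coordinate. First I would normalise $\mathcal{A}$ by the shifting operators $S_{ij}$ ($1\le i<j\le n$): for each $A\in\mathcal{A}$ with $j\in A$, $i\notin A$, replace $A$ by $(A\setminus\{j\})\cup\{i\}$ whenever that set is not already present. Each $S_{ij}$ preserves $|\mathcal{A}|$, keeps every member a $k$-set, and preserves the $t$-intersecting property, so after finitely many steps I may assume $\mathcal{A}$ is \emph{shifted}, i.e.\ stable under all $S_{ij}$. Shiftedness forces the intersections to be ``carried'' by the small coordinates and is the mechanism that will make the induction close.

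The heart of the argument is a link lemma: if $\mathcal{A}\subseteq\binom{[n]}{k}$ is shifted and $t$-intersecting with $n>2k-t$, then the link $\mathcal{A}(n)=\{A\setminus\{n\}:n\in A\in\mathcal{A}\}$ is again $t$-intersecting, now as a family of $(k-1)$-subsets of $[n-1]$. I would prove this by contradiction: if $A',B'\in\mathcal{A}(n)$ satisfied $|A'\cap B'|=t-1$, then $|A'\cup B'|=2k-t-1<n-1$, so some coordinate $m<n$ lies outside $A'\cup B'$; stability under $S_{mn}$ then forces $A'\cup\{m\}\in\mathcal{A}$, and $A'\cup\{m\}$ meets $B'\cup\{n\}$ in only the $t-1$ elements of $A'\cap B'$, contradicting $t$-intersection. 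Splitting $\mathcal{A}$ according to whether a member contains $n$ and writing $\mathcal{A}(\bar n)=\{A\in\mathcal{A}:n\notin A\}$ then gives
\[
|\mathcal{A}|=|\mathcal{A}(\bar n)|+|\mathcal{A}(n)|\le\binom{n-1-t}{k-t}+\binom{n-1-t}{k-1-t}=\binom{n-t}{k-t},
\]
where the two inequalities are the induction hypotheses for parameters $(n-1,k,t)$ and $(n-1,k-1,t)$ and the last step is Pascal's identity.

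The main obstacle is the threshold bookkeeping, and this is where the sharp constant really lives. The reduction to $(n-1,k,t)$ and $(n-1,k-1,t)$ is legitimate only while those triples meet their own thresholds $n-1\ge(k-t+1)(t+1)$ and $n-1\ge(k-t)(t+1)$, and a direct check shows the binding one holds precisely when $n>(k-t+1)(t+1)$. Thus the induction cannot be pushed below the threshold --- and it must not, since for $2k-t<n<(k-t+1)(t+1)$ the $t$-star is no longer optimal (families consisting of $k$-sets meeting a small fixed block in many points are larger). The entire difficulty therefore concentrates in the diagonal base case $n=(k-t+1)(t+1)$, where the star ties with the first competing family. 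I would settle this boundary bound either by Frankl's analysis of shifted families restricted to the core $[2k-t]$, or --- cleanest for the exact constant --- by Wilson's linear-algebra route: realise the problem inside the Johnson scheme $J(n,k)$, encode the $t$-intersection constraint through the appropriate eigenspaces, and apply the Hoffman/ratio bound, whose value equals $\binom{n-t}{k-t}$ exactly at and above the threshold.

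Finally, for uniqueness when $n>(k-t+1)(t+1)$ strictly, I would trace equality back through the induction. Equality forces both $|\mathcal{A}(\bar n)|=\binom{n-1-t}{k-t}$ and $|\mathcal{A}(n)|=\binom{n-1-t}{k-1-t}$, so by the inductive characterisation each of $\mathcal{A}(\bar n)$ and $\mathcal{A}(n)$ is a $t$-star; the strict inequality excludes the boundary ties and forces the two stars to share a common centre, so the shifted $\mathcal{A}$ is the canonical star $\{A:[t]\subseteq A\}$. The remaining delicate point is to undo the compressions: a separate stability argument is needed to show that any maximum $t$-intersecting family whose shift is the canonical star must itself be a $t$-star, which yields that the original extremal $\mathcal{A}$ equals $\{A\in\binom{[n]}{k}:T\subseteq A\}$ for some $t$-set $T$, completing the proof.
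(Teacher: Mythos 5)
The paper does not actually prove Theorem \ref{EKR}; it quotes it as background, attributing it to \cite{EKR, Frankl, Wilson}, so your proposal has to stand on its own rather than be compared to an in-paper argument. The skeleton you chose is Frankl's classical route, and the parts you carry out are sound: the shifting operators $S_{ij}$ preserve size, uniformity and the $t$-intersecting property; your link lemma is correctly proved (if $|A'\cap B'|=t-1$, then $|A'\cup B'|=2k-t-1<n-1$ supplies the coordinate $m$, and $S_{mn}$-stability puts $A'\cup\{m\}$ in $\mathcal{A}$, meeting $B'\cup\{n\}$ in only $t-1$ points); and the threshold bookkeeping is right, including the fact that $n-1\geq(k-t+1)(t+1)$ automatically gives $n-1>2k-t$, so the side hypothesis propagates down the induction.

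There are, however, two genuine gaps. First, as you yourself observe, the entire quantitative content concentrates in the base case $n=(k-t+1)(t+1)$, and there you give no argument: you defer to ``Frankl's analysis'' or to Wilson's ratio bound. Wilson's eigenvalue computation \emph{is} the hard content of this theorem --- it alone yields the bound for the whole range $n\geq(k-t+1)(t+1)$, which would make your inductive scaffolding redundant --- so invoking it as a black box means the boundary case is cited, not proved. Second, the uniqueness induction breaks at the first step above the threshold: when $n=(k-t+1)(t+1)+1$, the branch $\mathcal{A}(\bar n)$ has parameters $(n-1,k,t)$ sitting \emph{exactly at} its threshold, where the extremal family is not unique --- the family $\{A\in\binom{[n-1]}{k}:|A\cap[t+2]|\geq t+1\}$ of Ahlswede--Khachatrian type \cite{AK} ties with the $t$-star there --- so the ``inductive characterisation'' you appeal to is simply unavailable for that branch, and the tie must be excluded by a separate argument (the branch $\mathcal{A}(n)$, with parameters $(n-1,k-1,t)$, lies strictly above its threshold and is fine). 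You also flag, but do not supply, the uncompression stability step showing that a maximum family whose shift is the canonical star is itself a star. For the paper's purposes, where Theorem \ref{EKR} serves only as motivation, deferring to \cite{Frankl} and \cite{Wilson} is harmless; as a self-contained proof, your proposal is incomplete at precisely these two points.
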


Later, Ahlswede and Khachatrian \cite{AK} extended the Erd{\H o}s-Ko-Rado theorem by determining the structure of all  $t$-intersecting set systems of maximum size for all possible $n$ (see also \cite{Bey, FT, Kee, Ku_Wong3, MT, Pyber, Toku} for some related results). There have been many recent results showing that a version of the Erd{\H o}s-Ko-Rado theorem holds for combinatorial objects other than set systems (see \cite{AK2,B, B2, B3, BH,CK, CP, DF, E, EFP, FW, GM, HS, HST, HT, HK, KL, KR,KW, Ku_Wong, Ku_Wong2, Ku_Wong4,LM, LW, Moon, WZ, W}). Most notably is the results of Ellis, Friedgut and Pilpel \cite{EFP} who showed that for sufficiently large $n$ depending on $t$, a $t$-intersecting family $\A$ of permutations has size at most $(n-t)!$, with equality if and only if $\A$ is a coset of the stabilizer of $t$ points, thus settling an old conjecture of Deza and Frankl in the affirmative. The proof uses spectral methods and representations of the symmetric group. 

Let $S_{n}$ denote the set of permutations of $[n]$. For a positive integer $k$, define $S_{n,k}$ to be the set of all permutations of $[n]$ with exactly $k$ disjoint cycles, i.e.,
\[ S_{n,k} = \{\pi \in S_{n}: \pi = c_{1}c_{2} \cdots c_{k}\},\] 
where $c_1,c_2,\dots ,c_k$ are disjoint cycles. It is well known that the size of $S_{n,k}$ is given by $\left [ \begin{matrix}n\\ k \end{matrix}\right]=(-1)^{n-k}s(n,k)$, where $s(n,k)$ is the {\em Stirling number of the first kind}.

We shall use the following notations:
\begin{itemize}
\item[\textnormal{(a)}] $N(c)=\{a_1,a_2,\dots, a_l\}$ for a cycle $c=(a_1,a_2,\dots, a_l)$;
\item[\textnormal{(b)}] $M(\pi)=\{c_1,c_2,\dots, c_k\}$ for a $\pi=c_1c_2\dots c_k\in S_{n,k}$; 
\end{itemize}

A family $\mathcal{A} \subseteq S_{n,k}$ is said to be $t$-{\em intersecting} if any two elements of $\mathcal{A}$ have at least $t$ common cycles, i.e., $\vert M(\pi_1)\cap M(\pi_2)\vert\geq t$ for all $\pi_1,\pi_2\in \A$.

\begin{thm}\label{thm_main} Given any positive integers $k,t$ with $k\geq t+1$, there exists an integer $n_0=n_0(k,t)$, such that for all $n\geq n_0$, if $\mathcal{A} \subseteq S_{n,k}$ is $t$-intersecting, then 
\[ |\mathcal{A}| \le \left [ \begin{matrix}n-t\\ k-t \end{matrix}\right],\]
with equality if and only if $\mathcal{A}$ is the stabiliser of $t$ fixed points.
\end{thm}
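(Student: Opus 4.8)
The plan is to first determine the sizes of all ``stars'' (trivially $t$-intersecting families), identify the largest one as the conjectured extremum, and then prove that for $n$ large every $t$-intersecting family must itself be a star, at which point the bound and the uniqueness statement fall out. For a set $T=\{c_{1},\dots,c_{t}\}$ of $t$ pairwise disjoint cycles whose supports have total size $s$, write $\A_{T}=\{\pi\in S_{n,k}:T\subseteq M(\pi)\}$. Since the remaining $n-s$ points must be arranged into the remaining $k-t$ cycles, $|\A_{T}|=\left[\begin{matrix}n-s\\ k-t\end{matrix}\right]$, a quantity depending only on $s$ and strictly decreasing in $s$. As $t$ disjoint cycles occupy at least $t$ points, with equality precisely when every $c_{i}$ is a fixed point, the largest star is exactly the stabiliser of $t$ fixed points, of size $\left[\begin{matrix}n-t\\ k-t\end{matrix}\right]$. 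To make the ``$n$ large'' hypothesis quantitative I would record the asymptotics $\left[\begin{matrix}m\\ j\end{matrix}\right]\sim \frac{(m-1)!}{(j-1)!}(\ln m)^{j-1}$ for fixed $j$, from which $\left[\begin{matrix}n-s\\ k-t\end{matrix}\right]/\left[\begin{matrix}n-t\\ k-t\end{matrix}\right]=\Theta(n^{-(s-t)})$; thus a star whose core uses even a single non-fixed cycle (so $s\geq t+1$) is smaller than the fixed-point star by a factor of order $n$. This factor-$n$ gap between the best star and every other star is the engine of the argument and is what forces $n$ to be large.

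Next I would reduce an arbitrary $t$-intersecting family $\A$ to a single star. Fixing any $\pi_{0}\in\A$, every $\pi\in\A$ shares at least $t$ cycles with $\pi_{0}$ and hence contains one of the $\binom{k}{t}$ subsets $T\in\binom{M(\pi_{0})}{t}$, giving the crude bound $|\A|\le\sum_{T}|\A_{T}|$. By the separation above, only those cores $T$ consisting entirely of fixed points of $\pi_{0}$ contribute at the top order $\left[\begin{matrix}n-t\\ k-t\end{matrix}\right]$, while every other core contributes only $O(\left[\begin{matrix}n-t\\ k-t\end{matrix}\right]/n)$. The substance of the proof is to convert this $\binom{k}{t}$-fold overcount into the exact bound by showing that, for $n\ge n_{0}(k,t)$, the family $\A$ has $t$ cycles common to \emph{all} of its members. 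I would establish this by a kernel/delta-system (sunflower) argument in the spirit of Frankl and Deza--Frankl, applied to the $t$-intersecting $k$-uniform family $\{M(\pi):\pi\in\A\}$, using the factor-$n$ separation to certify that any $t$-intersecting family admitting no common $t$-core is pinned strictly below $\left[\begin{matrix}n-t\\ k-t\end{matrix}\right]$. Once a common core $T^{*}$ is secured we have $\A\subseteq\A_{T^{*}}$, so $|\A|\le\left[\begin{matrix}n-s(T^{*})\\ k-t\end{matrix}\right]\le\left[\begin{matrix}n-t\\ k-t\end{matrix}\right]$; strict monotonicity of $s\mapsto\left[\begin{matrix}n-s\\ k-t\end{matrix}\right]$ then forces $s(T^{*})=t$ in the case of equality, i.e.\ $T^{*}$ is a set of $t$ fixed points, and forces $\A=\A_{T^{*}}$, the full stabiliser. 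A convenient alternative, once a single common fixed point has been isolated, is to delete it and induct along $(n,k,t)\mapsto(n-1,k-1,t-1)$, since $\left[\begin{matrix}(n-1)-(t-1)\\ (k-1)-(t-1)\end{matrix}\right]=\left[\begin{matrix}n-t\\ k-t\end{matrix}\right]$, reducing to the base case $t=1$.

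The main obstacle is exactly the structural step above: proving that a near-maximal $t$-intersecting family cannot avoid a common core of $t$ cycles. Relative to the classical set-system setting, the difficulty is that the underlying universe of cycles is highly non-uniform --- there are $\binom{n}{\ell}(\ell-1)!$ cycles of each length $\ell$, and the star sizes themselves grow with $n$ --- so the sunflower extraction has to be interwoven with the Stirling estimates to guarantee that every non-trivial $t$-intersecting configuration is genuinely a lower-order term. Carrying this out, and thereby pinning down an explicit threshold $n_{0}(k,t)$, is where essentially all of the work lies; by contrast, the determination of the extremal family and the equality analysis are then immediate consequences of the strict monotonicity of $\left[\begin{matrix}n-s\\ k-t\end{matrix}\right]$ in $s$.
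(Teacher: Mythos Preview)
Your scaffolding---fix $\pi_{0}$, decompose $\A=\bigcup_{T\in\binom{M(\pi_{0})}{t}}\A(T)$, and use the Stirling asymptotics $\left[\begin{smallmatrix}m\\ j\end{smallmatrix}\right]=\Theta((m-1)!(\ln m)^{j-1})$ to separate star sizes---is exactly the paper's. The gap is that you have explicitly deferred the one step that carries the proof, namely showing that a maximal $t$-intersecting $\A$ either is a star or is strictly smaller than $\left[\begin{smallmatrix}n-t\\ k-t\end{smallmatrix}\right]$. Saying you would ``establish this by a kernel/delta-system argument'' and that ``essentially all of the work lies'' there is an honest assessment, but it is not a proof; and your proposed engine for the non-star case, the ``factor-$n$ separation'' between stars with $s=t$ and $s\ge t+1$, does not by itself dissolve the $\binom{k}{t}$-fold overcount when $\pi_{0}$ has several fixed points, since several cores $T$ then sit at the top order simultaneously.

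The paper fills this gap with a clean two-case dichotomy on each $\A^{*}(T)$ (the family $\A(T)$ with the cycles of $T$ deleted). Call a subset of $\A^{*}(T)$ \emph{independent} if its members are pairwise cycle-disjoint; this is precisely a sunflower with core $T$. \textbf{Case 2:} if some $\A^{*}(T_{0})$ contains an independent set of size $k+1$, then a one-line pigeonhole (Lemma~\ref{lm_main_independent}) forces every $\pi\in\A$ to contain $T_{0}$: otherwise $\pi$ would have to pick up a distinct non-$T_{0}$ cycle from each of the $k+1$ sunflower members, contradicting $|M(\pi)|=k$. Hence $\A$ is the full star on $T_{0}$. \textbf{Case 1:} if every $\A^{*}(T)$ has independence number at most $k$, then a maximal independent set yields a covering of $\A^{*}(T)$ by at most $k\cdot k$ cycles, giving $|\A(T)|=|\A^{*}(T)|\le k^{2}\left[\begin{smallmatrix}n-t-1\\ k-t-1\end{smallmatrix}\right]$ (Lemma~\ref{lemma_maximum_size_independent}), and summing over the $\binom{k}{t}$ choices of $T$ gives $|\A|\le k^{2}\binom{k}{t}\left[\begin{smallmatrix}n-t-1\\ k-t-1\end{smallmatrix}\right]$. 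The comparison that finishes Case~1 is between $\left[\begin{smallmatrix}n-t-1\\ k-t-1\end{smallmatrix}\right]$ and $\left[\begin{smallmatrix}n-t-1\\ k-t\end{smallmatrix}\right]$, a $\Theta(\ln n)$ gap coming from Lemma~\ref{lemma_basic_inequality}, not the $\Theta(n)$ gap you emphasised. Once Case~2 gives a star, your monotonicity-in-$s$ argument for the bound and for the equality characterisation is exactly right; the induction $(n,k,t)\mapsto(n-1,k-1,t-1)$ is unnecessary.
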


\section{Stirling number revisited}

The unsigned Stirling number $\left [ \begin{matrix}n\\ k \end{matrix}\right]$ satisfies the recurrence relation
\begin{equation}\label{eq_recurrence}
\left [ \begin{matrix}n\\ k \end{matrix}\right]=\left [ \begin{matrix}n-1\\ k-1 \end{matrix}\right]+(n-1)\left [ \begin{matrix}n-1\\ k \end{matrix}\right],
\end{equation}
with initial conditions $\left [ \begin{matrix}0\\ 0 \end{matrix}\right] = 1$ and $\left [ \begin{matrix}n\\ 0 \end{matrix}\right] = \left [ \begin{matrix} 0\\ k \end{matrix}\right] = 0$,  $n > 0$. Note that $\left [ \begin{matrix}n\\ n\end{matrix}\right]=1$.
By using equation (\ref{eq_recurrence}) and induction on $n$, 
\begin{equation}\label{eq_base_case}
\left [ \begin{matrix}n\\ 1\end{matrix}\right]=(n-1)!.
\end{equation}
By applying  equation (\ref{eq_recurrence}) repeatedly, 
\begin{align}\label{eq_full_series}
\left [ \begin{matrix}n\\ k \end{matrix}\right] &=\left [ \begin{matrix}n-1\\ k-1 \end{matrix}\right]+(n-1)\left [ \begin{matrix}n-1\\ k \end{matrix}\right]\notag\\
&=\left [ \begin{matrix}n-1\\ k-1 \end{matrix}\right]+(n-1)\left [ \begin{matrix}n-2\\ k-1 \end{matrix}\right]+(n-1)(n-2)\left [ \begin{matrix}n-2\\ k \end{matrix}\right]\notag\\
&=\left [ \begin{matrix}n-1\\ k-1 \end{matrix}\right]+(n-1)\left [ \begin{matrix}n-2\\ k-1 \end{matrix}\right]+(n-1)(n-2)\left [ \begin{matrix}n-3\\ k-1 \end{matrix}\right]+(n-1)(n-2)(n-3)\left [ \begin{matrix}n-3\\ k \end{matrix}\right]\notag\\
&\vdots\notag\\
&=\sum_{r=k-1}^{n-1} \frac{(n-1)!}{r!}\left [ \begin{matrix} r\\ k-1 \end{matrix}\right].
\end{align}
In particular (by  equations (\ref{eq_base_case}) and (\ref{eq_full_series})),
\begin{equation}\label{eq_for_k2}
\left [ \begin{matrix}n\\ 2 \end{matrix}\right]=(n-1)! \sum_{r=1}^{n-1} \frac{1}{r}.
\end{equation}
By elementary calculus, it is easy to show that  for sufficiently large $n$, 
\begin{equation}\label{eq_calculus1}
\frac{\ln n}{2}<\ln (n-1)+\frac{1}{n-1}\leq \sum_{r=1}^{n-1} \frac{1}{r}\leq \ln (n-1)+1<2\ln n.
\end{equation}
Hence, there  are positive constants $\alpha(2)$ and $\beta(2)$ such that 
\begin{equation}\label{eq_ineqaulity_k2}
\alpha(2)((n-1)!(\ln n))<\left [ \begin{matrix}n\\ 2 \end{matrix}\right]<\beta(2)((n-1)!(\ln n)),
\end{equation}
for all $n\geq 2$.

Again, by elementary calculus, for $m\geq 1$ and sufficiently large $n$ depending on $m$, 
\begin{equation}\label{eq_calculus2}
\frac{\ln^{m+1} n}{2(m+1)}<\sum_{r=1}^{n-1} \frac{\ln^m r}{r}<\frac{2 \ln^{m+1} n}{m+1}.
\end{equation}
The following lemma follows by using  equations (\ref{eq_full_series}) and (\ref{eq_calculus2}), and induction on $k$.
\begin{lm}\label{lemma_basic_inequality} There are positive constants $\alpha(k)$ and $\beta(k)$ such that
\begin{equation}
\alpha(k)((n-1)!(\ln^{k-1} n))<\left [ \begin{matrix}n\\ k \end{matrix}\right]<\beta(k)((n-1)!(\ln^{k-1} n)),\notag
\end{equation}
for all $n\geq k$. 
\end{lm}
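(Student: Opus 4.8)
The plan is to prove the two-sided bound by induction on $k$, using the closed form (\ref{eq_full_series}) to reduce the estimate for $\left[\begin{matrix}n\\ k\end{matrix}\right]$ to the estimate for $\left[\begin{matrix}r\\ k-1\end{matrix}\right]$ supplied by the inductive hypothesis. The base case $k=2$ is exactly the two-sided bound (\ref{eq_ineqaulity_k2}) (and $k=1$ is the equality $\left[\begin{matrix}n\\ 1\end{matrix}\right]=(n-1)!$ from (\ref{eq_base_case}), for which $\ln^{0} n=1$). So I would assume $k\geq 3$ and that the claim holds for $k-1$ with some positive constants $\alpha(k-1),\beta(k-1)$.

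For the inductive step I would substitute the inductive bounds into (\ref{eq_full_series}). Since every index $r$ in the sum satisfies $r\geq k-1$, the hypothesis applies to each term $\left[\begin{matrix}r\\ k-1\end{matrix}\right]$, and using $\frac{(n-1)!}{r!}(r-1)!=\frac{(n-1)!}{r}$ I obtain
\begin{equation}
\alpha(k-1)(n-1)!\sum_{r=k-1}^{n-1}\frac{\ln^{k-2}r}{r}<\left[\begin{matrix}n\\ k\end{matrix}\right]<\beta(k-1)(n-1)!\sum_{r=k-1}^{n-1}\frac{\ln^{k-2}r}{r}.\notag
\end{equation}
The remaining task is to control $\sum_{r=k-1}^{n-1}\frac{\ln^{k-2}r}{r}$, which is precisely where (\ref{eq_calculus2}) enters with $m=k-2\geq 1$. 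For the upper bound I would simply enlarge the range to $\sum_{r=1}^{n-1}$ (adding nonnegative terms) and apply the upper estimate in (\ref{eq_calculus2}), producing a bound of order $\ln^{k-1}n/(k-1)$ and hence $\beta(k)=2\beta(k-1)/(k-1)$.

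The step requiring the most care is the lower bound, because (\ref{eq_calculus2}) estimates the full sum $\sum_{r=1}^{n-1}$ while (\ref{eq_full_series}) produces a sum starting at $r=k-1$. I would write $\sum_{r=k-1}^{n-1}=\sum_{r=1}^{n-1}-\sum_{r=1}^{k-2}$ and note that the deleted tail $\sum_{r=1}^{k-2}\frac{\ln^{k-2}r}{r}$ is a constant depending only on $k$, whereas the lower estimate in (\ref{eq_calculus2}) forces $\sum_{r=1}^{n-1}\frac{\ln^{k-2}r}{r}\to\infty$ at order $\ln^{k-1}n$. Thus for all sufficiently large $n$ the truncated sum still exceeds, say, half of the full sum, and a lower bound of the required order survives with $\alpha(k)$ a suitable multiple of $\alpha(k-1)/(k-1)$.

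These two bounds are thereby established for all $n\geq n_{1}(k)$ for some threshold $n_{1}(k)$. To extend them to all $n\geq k$ as stated, I would observe that for each of the finitely many values $k\leq n< n_{1}(k)$ the ratio $\left[\begin{matrix}n\\ k\end{matrix}\right]\big/\big((n-1)!\ln^{k-1}n\big)$ is a well-defined positive real, the denominator being nonzero since $n\geq k\geq 2$ gives $\ln n>0$. Hence shrinking $\alpha(k)$ and enlarging $\beta(k)$ through finitely many comparisons absorbs these initial cases without affecting the large-$n$ argument. This finite-absorption step, together with the bookkeeping of the truncated sum in the lower bound, is the only place where genuine care is needed; the rest is a direct substitution.
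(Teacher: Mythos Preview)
Your proposal is correct and follows exactly the approach the paper indicates: the paper's ``proof'' is the single sentence that the lemma follows from equations (\ref{eq_full_series}) and (\ref{eq_calculus2}) together with induction on $k$, and you have carried out precisely that induction, including the two bookkeeping points (truncation of the sum at $r=k-1$ for the lower bound, and absorption of the finitely many small $n$) that the paper leaves implicit.
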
 

\section{Main results}

A family $\B\subseteq S_{n,k}$ is said to be \emph{independent} if $M(\pi_1)\cap M(\pi_2)=\varnothing$ for all $\pi_1,\pi_2\in\B$. 

\begin{lm}\label{lemma_maximum_size_independent} Let $\A\subseteq S_{n,k}$ and $k\geq 2$. If a maximal independent subset of $\A$ is of size at most $l$, then
\begin{equation}
\vert\A\vert\leq kl\left [ \begin{matrix}n-1\\ k-1 \end{matrix}\right].\notag
\end{equation}
\end{lm}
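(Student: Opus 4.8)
The plan is to use the maximality of the independent subset to cover all of $\A$ by only a few ``cycle neighbourhoods,'' and then bound each neighbourhood by a Stirling number. First I would fix a maximal independent subset $\B=\{\sigma_1,\dots,\sigma_m\}\subseteq\A$ with $m\le l$, which exists by hypothesis. The key consequence of maximality is that every $\pi\in\A$ shares at least one cycle with some $\sigma_i$: if $M(\pi)\cap M(\sigma_i)=\varnothing$ held for all $i$, then $\B\cup\{\pi\}$ would still be independent, contradicting the maximality of $\B$ (and for $\pi\in\B$ the claim is trivial). Hence
\[
\A\subseteq\bigcup_{i=1}^{m}\left\{\pi\in S_{n,k}:M(\pi)\cap M(\sigma_i)\neq\varnothing\right\}.
\]

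Next I would estimate the size of each set in this union. Writing $M(\sigma_i)=\{c_{i,1},\dots,c_{i,k}\}$, a permutation meeting $M(\sigma_i)$ must contain at least one of these $k$ cycles, so it is enough to count the permutations of $S_{n,k}$ containing a single prescribed cycle $c$. If $\vert N(c)\vert=j$, then such a permutation is determined by arranging the remaining $n-j$ points into $k-1$ disjoint cycles, and there are exactly $\left [ \begin{matrix}n-j\\ k-1 \end{matrix}\right]$ of these.

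The only auxiliary fact I need is that the unsigned Stirling numbers are nondecreasing in their upper argument, which is immediate from the recurrence (\ref{eq_recurrence}): for $m\geq2$,
\[
\left [ \begin{matrix}m\\ k-1 \end{matrix}\right]=\left [ \begin{matrix}m-1\\ k-2 \end{matrix}\right]+(m-1)\left [ \begin{matrix}m-1\\ k-1 \end{matrix}\right]\geq\left [ \begin{matrix}m-1\\ k-1 \end{matrix}\right].
\]
Since $\vert N(c)\vert\geq1$ forces $n-j\le n-1$, every single cycle lies in at most $\left [ \begin{matrix}n-1\\ k-1 \end{matrix}\right]$ members of $S_{n,k}$.

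Finally I would assemble the bound by a double union bound over the $m$ permutations $\sigma_i$ and their $k$ cycles:
\[
\vert\A\vert\le\sum_{i=1}^{m}\sum_{j=1}^{k}\left\vert\left\{\pi\in S_{n,k}:c_{i,j}\in M(\pi)\right\}\right\vert\le mk\left [ \begin{matrix}n-1\\ k-1 \end{matrix}\right]\le kl\left [ \begin{matrix}n-1\\ k-1 \end{matrix}\right],
\]
which is exactly the claim. The argument is elementary and I do not anticipate a genuine obstacle; the only points requiring care are checking that maximality forces \emph{every} element of $\A$ (including those already in $\B$) to meet some $\sigma_i$, and verifying the Stirling monotonicity, both of which are handled above. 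The hypothesis $k\ge2$ serves merely to guarantee that the residual $k-1\ge1$ cycles make sense in the counting step.
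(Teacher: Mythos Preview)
Your argument is correct and follows essentially the same route as the paper: fix a maximal independent subset, use maximality to force every element of $\A$ to contain one of the at most $kl$ cycles occurring in that subset, and bound each resulting cycle-neighbourhood by $\left[\begin{smallmatrix}n-1\\k-1\end{smallmatrix}\right]$ via the count $\left[\begin{smallmatrix}n-|N(c)|\\k-1\end{smallmatrix}\right]$ together with monotonicity in the upper argument. Your write-up is in fact slightly more careful than the paper's, in that you explicitly allow the maximal independent set to have size $m\le l$ rather than exactly $l$, and you spell out the Stirling monotonicity step from the recurrence.
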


\begin{proof} Let $\B=\{\pi_1,\pi_2,\dots ,\pi_l\}$ be a maximal independent subset of $\A$ of size $l$. Let $\pi_i=c_{i1}c_{i2}\dots c_{ik}$ where $c_{i1},c_{i2},\dots ,c_{ik}$ are disjoint cycles, and 
\begin{equation}
Q=\bigcup_{i=1}^l M(\pi_i).\notag
\end{equation} 
Note that $\vert Q\vert =kl$. 

Let 
\begin{equation}
\A_{ij}=\{\pi \in \A\ :\ c_{ij}\in M(\pi)\}.\notag
\end{equation}
Let $\pi\in\A\setminus \B$. By the maximality of $\B$, $M(\pi)\cap Q\neq \varnothing$. So, $c_{ij}\in M(\pi)$ for some $i,j$, and $\pi\in \A_{ij}$. Hence,
\begin{equation}
\A=\bigcup_{i,j} \A_{ij}.\notag
\end{equation}
The lemma follows by noting that
\begin{equation}
\vert A_{ij}\vert\leq \left [ \begin{matrix}n-\vert N(c_{ij})\vert\\ k-1 \end{matrix}\right]\leq \left [ \begin{matrix}n-1\\ k-1 \end{matrix}\right].\notag
\end{equation}
\end{proof}

Let $\A\subseteq S_{n,k}$ and $c_1,\dots ,c_t$ by cycles such that $N(c_i)\cap N(c_j)=\varnothing$ for $i\neq j$. Let $T=\{c_1,\dots, c_t\}$. We set
\begin{equation}
\A(T)=\{\pi\in\A\ :\ T\subseteq M(\pi)\}.\notag
\end{equation}
Now, for each element $\pi\in\A(T)$, we remove all the cycles $c_1,c_2,\dots, c_t$ from $\pi$ and denote the resulting set by $\A^*(T)$. Let $P=\bigcup_{i=1}^t N(c_i)$. Note that $\vert\A(T)\vert=\vert\A^*(T)\vert$ and $\A^*(T)\subseteq S_{n-\vert P\vert,k-t}$. Here, $S_{n-\vert P\vert,k-t}$ is the set of all permutations of $[n]\setminus P$ with exactly $k-t$ disjoint cycles.

\begin{lm}\label{lm_main_independent} Let $\A\subseteq S_{n,k}$ be maximal $t$-intersecting and $k\geq t+1$. Let $T=\{c_1,\dots, c_t\}$ with $N(c_i)\cap N(c_j)=\varnothing$ for $i\neq j$. If $\mathcal A^*(T)$ has an independent set of size at least $k+1$, then
\begin{equation}
\A=\left \{ \pi \in S_{n,k}\ :\  T\subseteq M(\pi)\right\}.\notag
\end{equation}
\end{lm}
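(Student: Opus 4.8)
The plan is to use the large independent set to pin down the cycle structure of every element of $\A$, and then invoke maximality. First I would unpack the hypothesis. An independent set of size $k+1$ in $\A^*(T)$ consists of permutations $\sigma_1,\dots,\sigma_{k+1}\in\A^*(T)$ with $M(\sigma_i)\cap M(\sigma_j)=\varnothing$ for $i\neq j$. Reattaching the cycles of $T$ produces $\pi_1,\dots,\pi_{k+1}\in\A(T)\subseteq\A$ with $\pi_i=c_1\cdots c_t\sigma_i$. Since each $\sigma_i$ is a permutation of $[n]\setminus P$, every cycle of $\sigma_i$ avoids $P$, so $M(\sigma_i)\cap T=\varnothing$; hence $M(\pi_i)=T\sqcup M(\sigma_i)$ with $|M(\sigma_i)|=k-t$, and the sets $M(\sigma_1),\dots,M(\sigma_{k+1})$ are pairwise disjoint and each disjoint from $T$.

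The heart of the argument is the forward inclusion $\A\subseteq\{\pi\in S_{n,k}:T\subseteq M(\pi)\}$. I would fix an arbitrary $\rho\in\A$, put $s=|M(\rho)\cap T|$, and aim to show $s=t$. Because $\A$ is $t$-intersecting, $|M(\rho)\cap M(\pi_i)|\geq t$ for every $i$. Splitting along the disjoint union $M(\pi_i)=T\sqcup M(\sigma_i)$ gives $|M(\rho)\cap M(\sigma_i)|\geq t-s$ for each of the $k+1$ indices $i$. If $s<t$, then $t-s\geq 1$, so for each $i$ one may choose a cycle $d_i\in M(\rho)\cap M(\sigma_i)$; the pairwise disjointness of the $M(\sigma_i)$ forces $d_1,\dots,d_{k+1}$ to be distinct cycles of $\rho$, contradicting $|M(\rho)|=k$. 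Therefore $s=t$, i.e.\ $T\subseteq M(\rho)$, as required.

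For the reverse inclusion I would use maximality of $\A$. Let $\pi\in S_{n,k}$ be any permutation with $T\subseteq M(\pi)$. For every $\rho\in\A$ the forward inclusion gives $T\subseteq M(\rho)$, so $|M(\pi)\cap M(\rho)|\geq|T|=t$; thus $\A\cup\{\pi\}$ is still $t$-intersecting, and maximality forces $\pi\in\A$. Combining the two inclusions yields $\A=\{\pi\in S_{n,k}:T\subseteq M(\pi)\}$.

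The only real subtlety I anticipate is the counting step in the second paragraph: one must genuinely use the bound $\geq k+1$ rather than $\geq k$ on the independent set, since when $s=0$ a family of size only $k$ would produce just $k$ witnessing cycles and no contradiction. The remaining work is routine bookkeeping with the notations $M(\cdot)$ and $N(\cdot)$ and the observation that the $M(\sigma_i)$ avoid $T$.
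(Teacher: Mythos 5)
Your proof is correct and follows essentially the same route as the paper: both derive the forward inclusion by extracting one cycle $d_i\in M(\rho)\cap M(\sigma_i)$ for each of the $k+1$ pairwise disjoint sets $M(\sigma_i)$ whenever $T\not\subseteq M(\rho)$, contradicting $|M(\rho)|=k$, and then invoke maximality for the reverse inclusion. Your write-up is in fact slightly more careful than the paper's (explicitly quantifying $s=|M(\rho)\cap T|$ and spelling out why maximality yields the reverse inclusion), but the underlying argument is identical.
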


\begin{proof} Let $\{\pi_1,\dots, \pi_{k+1}\}$ be an independent subset of $\mathcal A^*(T)$ of size $k+1$. For $l=1,2,\dots, k+1$, let 
\begin{equation}
\pi_l= c_1\dots c_td_{l,t+1}\dots d_{l,k},\notag
\end{equation}
where $c_1,\dots,c_t,d_{l,t+1},\dots, d_{l,k}$ are disjoint cycles. 

Suppose there is a $\pi\in\A$ such that $c_{i_0}\notin M(\pi)$ for a fixed $i_0$. Since $\A$ is $t$-intersecting,
\begin{equation}
\vert M(\pi)\cap M(\pi_l)\vert\geq t.\notag
\end{equation}
Therefore there is a $j_l$ ($t+1\leq j_l\leq k$) with $d_{l,j_l}\in M(\pi)$. Since all the $d_{l,j_l}$ are distinct, $k=\vert M(\pi)\vert\geq k+1$, a contradiction. Hence, 
\begin{equation}
\A\subseteq \left \{ \pi \in S_{n,k}\ :\  T\subseteq M(\pi)\right\}.\notag
\end{equation}
By the maximality of $\mathcal A$, the lemma follows.
\end{proof}

\begin{proof}[Proof of Theorem \ref{thm_main}] We may assume that $\A$ is maximal $t$-intersecting.

Suppose $k=t+1$. Since $\A$ is $t$-intersecting, there are $\pi_1,\pi_2\in\A$ such that
\begin{align}
\pi_1&=c_1c_2\dots c_td_1\notag\\
\pi_2&=c_1c_2\dots c_td_2\notag
\end{align}
where $c_1,\dots,c_t,d_1$ are disjoint cycles, $d_2\neq d_1$ and $N(d_2)=N(d_1)$. Suppose there is a $\pi\in\A$ with $c_{i_0}\notin\ M(\pi)$ for some $i_0$. Then $d_1,d_2\in M(\pi)$. But this is impossible as $N(d_1)=N(d_2)$. Hence,
\begin{equation}
\A= \{ \pi\in S_{n,k}\ :\ c_i\in M(\pi)\ \ \textnormal{for}\ \ i=1,2,\dots, t\},\notag
\end{equation}
for $\A$ is maximal $t$-intersecting. Let  $P=\bigcup_{i=1}^t N(c_i)$. Then
\begin{equation}
\vert\A\vert=\left [ \begin{matrix}n-\vert P\vert\\ 1 \end{matrix}\right]\leq \left [ \begin{matrix}n-t\\ 1 \end{matrix}\right],\notag
\end{equation}
with equality if and only if $\vert N(c_i)\vert=1$ for $i=1,2\dots,t$, i.e., $\A$ is the stabilizer of at least $t$ fixed points. Now, if $n\geq t+2$, then  $\A$ is the stabilizer of $t$ fixed points.

Suppose $k\geq t+2$.  Let $\pi_0=d_1d_2\dots d_k\in\A$ be fixed, where $d_1,\dots, d_k$ are disjoint cycles. Then
\begin{equation}
\A=\bigcup_{T\subseteq M(\pi_0),\vert T\vert=t} \A(T).\notag
\end{equation}

\vskip 0.5cm
\noindent
{\bf Case 1}. Suppose that for each $T\subseteq M(\pi_0)$ with $\vert T\vert=t$, all independent subsets of $\A^*(T)$ is of size at most $k$. Then by Lemma \ref{lemma_maximum_size_independent} and equation (\ref{eq_recurrence}),
\begin{equation}
\vert\A(T)\vert=\vert\A^*\vert\leq k^2\left [ \begin{matrix}n-\vert P\vert-1\\ k-t-1 \end{matrix}\right]\leq k^2\left [ \begin{matrix}n-t-1\\ k-t-1 \end{matrix}\right],\notag
\end{equation}
where $P=\bigcup_{c\in T} N(c)$. This implies that
\begin{equation}
\vert\A\vert\leq k^2\binom{k}{t}\left [ \begin{matrix}n-t-1\\ k-t-1 \end{matrix}\right].\notag
\end{equation}
By Lemma \ref{lemma_basic_inequality}, there is are positive constants $\alpha$ and $\beta$ such that
\begin{equation}
\vert\A\vert<\beta k^2\binom{k}{t}\left((n-t-2)!(\ln^{k-t-2} n)\right),\notag
\end{equation}
and
\begin{equation}
\alpha\left((n-t-2)!(\ln^{k-t-1} n)\right)<\left [ \begin{matrix}n-t-1\\ k-t \end{matrix}\right].\notag
\end{equation}
So, for sufficiently large $n$, $\vert\A\vert<\left [ \begin{matrix}n-t-1\\ k-t \end{matrix}\right]$, and by equation (\ref{eq_recurrence}), $\vert \A\vert<\left [ \begin{matrix}n-t\\ k-t \end{matrix}\right]$.

\vskip 0.5cm
\noindent
{\bf Case 2}. Suppose that there is a $T_0\subseteq M(\pi_0)$ with $\vert T_0\vert=t$, such that $\A^*(T_0)$ has an independent set of size at least $k+1$. By Lemma \ref{lm_main_independent}, 
\begin{equation}
\A=\left \{ \pi \in S_{n,k}\ :\  T_0\subseteq M(\pi)\right\}.\notag
\end{equation}
Let $P_0=\bigcup_{c\in T_0} N(c_0)$. Then
\begin{equation}
\vert \A\vert=\left[ \begin{matrix}n-\vert P_0\vert\\ k-t \end{matrix}\right]\leq \left[ \begin{matrix}n-t\\ k-t \end{matrix}\right],\notag
\end{equation}
with equality if and only if $\vert N(c_i)\vert=1$ for $i=1,2\dots,t$, i.e., $\A$ is the stabilizer of at least $t$ fixed points. 
\end{proof}

\section*{Acknowledgments}	

This project is supported by the Advanced Fundamental Research Cluster, University of Malaya (UMRG RG238/12AFR).

\end{document}